\documentclass[DIV12,11pt]{scrartcl}


\usepackage{amsthm,amssymb,amsmath}
\usepackage[noblocks]{authblk}

\usepackage{amsmath}
\usepackage{amsopn}
\usepackage{amssymb}
\usepackage{latexsym}
\usepackage{amsfonts}
\usepackage{amsthm}
\usepackage[all]{xy}
\usepackage{tikz}
\usetikzlibrary{decorations.markings}
\tikzstyle{vertex}=[circle, draw, inner sep=0pt, minimum size=6pt]


\setkomafont{title}{\normalfont \LARGE \bfseries}
\setkomafont{section}{\normalfont \Large \bfseries \boldmath}
\setkomafont{subsection}{\normalfont \large \bfseries}
\setkomafont{subsubsection}{\normalfont \normalsize \bfseries}
\setkomafont{descriptionlabel}{\itshape}
\setkomafont{paragraph}{\normalfont \bfseries \boldmath}






\newtheorem{thm}{Theorem}[section]
\newtheorem{pro}[thm]{Proposition}
\newtheorem{lem}[thm]{Lemma}
\newtheorem{cla}[thm]{Claim}

\newtheorem{cor}[thm]{Corollary}

\theoremstyle{definition}
\newtheorem{obs}[thm]{Observation}

\newtheorem{exa}[thm]{Example}
\newtheorem{defn}[thm]{Definition}

\newtheorem{conj}[thm]{Conjecture}

\newcommand{\een}{\end{enumerate}}
\newcommand{\blem}{\begin{lem}}
\newcommand{\elem}{\end{lem}}
\newcommand{\bcl}{\begin{cla}}
\newcommand{\ecl}{\end{cla}}
\newcommand{\ethm}{\end{thm}}
\newcommand{\bpr}{\begin{pro}}
\newcommand{\epr}{\end{pro}}
\newcommand{\bco}{\begin{cor}}
\newcommand{\eco}{\end{cor}}
\newcommand{\bcon}{\begin{conj}}
\newcommand{\econ}{\end{conj}}
\newcommand{\bde}{\begin{defn}}
\newcommand{\ede}{\end{defn}}
\newcommand{\bex}{\begin{exa}}
\newcommand{\eexa}{\end{exa}}
\newcommand{\bobs}{\begin{obs}}
\newcommand{\eobs}{\end{obs}}
\newcommand{\bexe}{\begin{exe}}
\newcommand{\eexe}{\end{exe}}

\begin{document}
\title{Enumeration of balanced finite group valued functions on directed graphs}
\author[1]{Yonah Cherniavsky}
\author[2]{Avraham Goldstein}
\author[3]{Vadim E. Levit}
\author[4]{Robert Shwartz}

\affil[1]{Department of Computer Science and Mathematics, Ariel University, Israel; yonahch@ariel.ac.il}
\affil[2]{City University, New-York, USA; avraham.goldstein.nyc@gmail.com}
\affil[3]{Department of Computer Science and Mathematics, Ariel University, Israel; levitv@ariel.ac.il}
\affil[1]{Department of Computer Science and Mathematics, Ariel University, Israel; robertsh@ariel.ac.il}
\date{\vspace*{-2em}}
\maketitle

\begin{abstract}
\textbf{Abstract.} A group valued function on a graph is called balanced if the product of its values along any cycle is equal to the identity element of the group. We compute the number of balanced functions from edges and vertices of a directed graph to a finite non-Abelian group. 
\\ \\
\textbf{Keywords:} consistent graphs; balanced signed graphs; balanced labelings of graphs; gain graphs; weighted graphs.
\end{abstract}

\section{Introduction}
Let $G$ be a finite group with the group operation denoted by $\cdot$ and the identity element denoted by $1$.
Let $\Gamma$ be a graph. Roughly speaking, a $G$-valued function $f$ on vertices and/or edges of $\Gamma$ is called {\it balanced} if the product of its values along any cycle of $\Gamma$ is $1$. 

The study of balanced functions can be conducted in three cases:
\begin{enumerate}
\item The graph $\Gamma$ is directed with the set of vertices $V$ and the set of directed edges $E$. When traveling between the vertices, we are allowed to travel with or against the direction of the edges. The value of a function $f$ on $\bar{e}$, which represents traveling the edge $e$ against its direction, is equal to $(f(e))^{-1}$. In this context, when the function is defined on edges only, the pair $(\Gamma,f)$ is called a network or a directed network. In this paper we shall call this {\it the flexible case}, meaning that the direction of an edge does not forbid us to walk against it. In the case when the group $G$ is Abelian (in particular $\mathbb Z$ or $\mathbb R$) the notion of balanced functions on edges for the flexible case, for functions taking values only on the edges, is introduced in the literature under different names. Thus, for example, in \cite{BHN} the set of such functions is exactly $Im(d)$, and in \cite{KY}, in somewhat different language, that set is referred to as the set of consistent graphs. In~\cite{T} such functions have been introduced under the name ``color-coboundaries". Also they appear in literature under the name ``tensions". They have been extensively studied, recent examples include~\cite{BeckEtAl},~\cite{BreuerDall},~\cite{Chen},~\cite{Woodall},~\cite{K}. In~\cite{BCNS} balanced $\mathbb C$-valued functions on edges appear in a certain connection with geometric representations of the Coxeter group associated with a graph. When the group $G$ is Abelian, the set of balanced functions forms a group in a natural way, the structure of this group was studied in~\cite{CGL2}. In a rather common terminology introduced by Zaslavsky,~\cite{Z}, a pair of a graph and such a function on edges of a graph is called a gain graph.

\item The graph $\Gamma$ is directed with the set of vertices $V$ and the set of directed edges $E$, but we are only allowed to travel with the direction of the edges. In this paper we shall call this {\it the rigid case}. When $f$ takes values only on the edges then in some literature, following Serre,~\cite{Serre}, the flexible case is described as a particular instance of the rigid case by introducing the set $\mathbb{E}$ as the new set of directed edges of $G$ (the cardinality of $\mathbb{E}$ is twice that of $E$), denoting by $\bar{e}\in \mathbb{E}$ the inverse of the directed edge $e\in {E}$ and requiring $f(\bar{e})=(f(e))^{-1}$,~\cite{BHN},~\cite{Serre}.

\item The graph $\Gamma$ is undirected. The value of a function $f$ on an edge $e$ does not depend on the direction of the travel on $e$. The case of balanced functions $f:E\rightarrow \mathbb{R}$ is studied in~\cite{OI}, where these functions are called ``cycle-vanishing edge valuations". For an Abelian group $A$, the case of balanced functions $f:V\bigcup E\rightarrow A$ is first introduced and studied in~\cite{NI}. The group structure of the groups of balanced functions on edges, and balanced functions on vertices and edges of an undirected graph with values in an Abelian group is studied in~\cite{CGL}.
\end{enumerate}

Notice that if we take functions with values in a non-Abelian group then the sets of the balanced function on edges and balanced functions on vertices and edges do not inherit any natural group structure. However, to find the number of these functions for directed graphs when the non-Abelian group is finite is possible, and we do it in the present paper.



In what follows we say that a directed graph is {\it weakly connected} if its underlying undirected graph is connected.

For the basics of Graph Theory we refer to \cite{Diestel}.
\section{The flexible case}
Let $\Gamma=(V,E)$ be a weakly connected directed graph, possibly with loops and multiple edges. Let $v,w\in V$ be two vertices connected by an edge $e$; $v$ is the origin of $e$ and $w$ is the endpoint of $e$. For $e\in E$ denote by $\bar{e}$ the same edge as $e$ but taken in the opposite direction. Thus $\bar{e}$ goes from $w$ to $v$. Let $\mathbb E=\left\{e,\bar{e}\,|\,e\in E\right\}$.
\bde\label{pathdef} A path $P$ from a vertex $x$ to a vertex $y$ is an alternating sequence $v_1$, $e_1$, $v_2$, $e_2$,...,$v_{n}$, $e_{n}$ of vertices from $V$ and different edges from $\mathbb E$ such that $v_1=x$ and each $e_j$, for $j=1,...,n-1$, goes from $v_j$ to $v_{j+1}$ and $e_n$ goes from $v_n$ to $y$. We permit the same edge $e$ to appear in a path twice - one time along and one time against its direction, since this is regarded as using two different edges from $\mathbb E$.
\ede
We require our graphs to be weakly connected. Namely, any two different vertices of our graph $\Gamma$ can be connected by a path.
\bde A path $P$ from a vertex $x$ to itself is called a cycle.
\ede
We permit the trivial cycle, which is the empty sequence containing no vertices and no edges.

\bde The length of a cycle is the number of its edges.
\ede

\bde A function $f:\mathbb E\rightarrow G$ such that $f(\bar e)=(f(e))^{-1}$ is called {\it balanced} if the product $f(e_1)\cdot f(e_2)\cdots f(e_n)$ of the values of $f$ over all the edges of any cycle of $\Gamma$ is equal to $1$.
\ede
\bde The set of all the balanced functions $f:\mathbb E\rightarrow G$ is denoted by $\mathcal{FE}(\mathbb E,G)$. 
\ede
\bde A function $h:V\bigcup\mathbb E\rightarrow G$, which takes both vertices and edges of $\Gamma$ to some elements of $G$, is called balanced if $h(\bar e)=(h(e))^{-1}$ and the product of its values $h(v_1)\cdot h(e_1)\cdot h(v_2)\cdot h(e_2)\cdots h(v_n)\cdot h(e_n)$ along any cycle of $\Gamma$ is $1$.
\ede
\bde The set of all the balanced functions $h:V\bigcup\mathbb E\rightarrow G$ is denoted by $\mathcal{FU}(\Gamma,G)$. 
\ede
\bde The set of all involutions of $G$ is denoted $G_2$. I.e., $G_2=\left\{a\in G\,|\,a^2=1\right\}$. 
\ede
The set $\mathcal{FE}(\mathbb E,G)$ is well understood and the following fact is well known.
\bpr\label{HFL}
For a finite group $G$, the cardinality of the set $\mathcal{FE}(\mathbb E,G)$ is equal to $|G|^{|V|-1}$.
\epr
\begin{proof}
Select a vertex $v$ and notice that the number of all functions $g:V\rightarrow G$ such that $g(v)=1$ is equal to $|G|^{|V|-1}$.  Consider the following bijection between the set of all $G$-valued functions $g$ on $V$ with $g(v)=1$ and the set $\mathcal{FE}(\mathbb E,G)$. For any such $g$, since each edge $e\in \mathbb E$ goes from some vertex $x$ to some vertex $y$, we define $f(e)=(g(x))^{-1}\cdot g(y)$. A straightforward calculation shows that $f\in\mathcal{FE}(\mathbb E,G)$. In the other direction of the bijection, for $f\in\mathcal{FE}(\mathbb E,G)$ we inductively construct the function $g$ as follows: we set $g(v)=1$; if $g(u)$ has been defined for a vertex $u$ then for every vertex $w$, for which there exists some edge $e$ from $u$ to $w$, we define $g(w)=g(u)\cdot f(e)$. Since $f\in\mathcal{FE}(\mathbb E,G)$, any two calculations of the value of $g$ on any vertex $u$ will produce the same result. The weak connectivity implies that every vertex indeed receives a value, and thus, our $g$ is well-defined.
\end{proof}
Our main result is the following.
\begin{thm}
Let $\Gamma=(V,E)$ be a weakly connected directed graph and $\Gamma'$ be its underlying undirected graph. Then:
\begin{enumerate}
\item If $\Gamma'$ is bipartite, then $|\mathcal{FU}(V\bigcup\mathbb E,G)|=|G|^{|V|}$.
\item If $\Gamma'$ is not bipartite, then $|\mathcal{FU}(V\bigcup\mathbb E,G)|=|G_2|\cdot |G|^{|V|-1}$.
\end{enumerate}
\end{thm}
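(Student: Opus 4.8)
The plan is to set up a bijection between balanced functions $h \in \mathcal{FU}(V\bigcup\mathbb E,G)$ and certain admissible pairs $(g,F)$, where $g=h|_V\colon V\to G$ is the vertex part and $F\in\mathcal{FE}(\mathbb E,G)$. The device is to absorb the vertex values into the edges: for a directed edge $e\in\mathbb E$ going from $x$ to $y$, set $F(e)=h(x)\cdot h(e)$. Then along any cycle $v_1,e_1,\dots,v_n,e_n$ the balance product $h(v_1)h(e_1)\cdots h(v_n)h(e_n)$ regroups exactly as $F(e_1)\cdots F(e_n)$, so $h$ is balanced if and only if every cycle product of $F$ equals $1$. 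First I would check that $F$ is antisymmetric, $F(\bar e)=F(e)^{-1}$: this is precisely the content of the length-two cycle $x,e,y,\bar e,x$, whose balance reads $h(x)h(e)h(y)h(e)^{-1}=1$, i.e. $F(e)F(\bar e)=1$. Since cycle products of $F$ of all lengths vanish, this shows $F\in\mathcal{FE}(\mathbb E,G)$, and the assignment $h\mapsto(h|_V,F)$ is invertible by $h(e)=g(x)^{-1}F(e)$, once one records which pairs are admissible.

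The admissibility condition is exactly that length-two identity rewritten in terms of $g$ and $F$: for every edge $e$ from $x$ to $y$ one needs $g(y)=F(e)^{-1}g(x)^{-1}F(e)$ (this guarantees the antisymmetry $h(\bar e)=h(e)^{-1}$ of the recovered edge function). The core of the argument is to count, for each fixed $F$, the vertex labelings $g$ satisfying all of these. Here I would invoke Proposition~\ref{HFL} to write $F(e)=\phi(x)^{-1}\phi(y)$ for a suitable $\phi\colon V\to G$. Substituting and conjugating, and setting $\psi(v):=\phi(v)\,g(v)\,\phi(v)^{-1}$, the admissibility condition collapses to the strikingly simple rule
\[
\psi(y)=\psi(x)^{-1}\qquad\text{for every edge }x\to y.
\]
Because $g\mapsto\psi$ is a bijection of $G^{V}$ for each fixed $\phi$, counting admissible $g$ is the same as counting $\psi$ obeying this rule, and this count does not depend on $F$.

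It then remains to count functions $\psi\colon V\to G$ taking inverse values across every edge, a purely combinatorial task on $\Gamma'$. Fixing a spanning tree rooted at $v_0$, the value $\psi(v_0)=a$ determines $\psi$ on the whole tree by alternating between $a$ and $a^{-1}$ according to the parity of the tree-distance. If $\Gamma'$ is bipartite, every edge joins vertices of opposite parity, so every $a\in G$ extends consistently and there are $|G|$ such $\psi$; if $\Gamma'$ is not bipartite, some edge joins two vertices of equal parity, forcing $a=a^{-1}$, i.e. $a\in G_2$, and then $\psi$ is the constant $a$, giving $|G_2|$ such $\psi$. Multiplying by $|\mathcal{FE}(\mathbb E,G)|=|G|^{|V|-1}$ from Proposition~\ref{HFL} yields $|G|^{|V|}$ and $|G_2|\cdot|G|^{|V|-1}$, respectively.

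I expect the main obstacle to be the reformulation step: checking that the cycle-balance of $h$ decomposes exactly into ``$F\in\mathcal{FE}(\mathbb E,G)$'' together with the length-two vertex identities, with no further independent constraints, and then confirming that the conjugation substitution $\psi=\phi g\phi^{-1}$ genuinely linearizes the vertex identity to $\psi(y)=\psi(x)^{-1}$ without hidden dependence on $F$ or on edge orientation. Loops and multiple edges require only a brief separate check, but a loop is a length-one odd cycle and so falls into the non-bipartite case automatically.
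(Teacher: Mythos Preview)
Your argument is correct, and it is organized differently from the paper's own proof. The paper handles the two cases with two different bijections: in the bipartite case it pairs $h$ with the plain restriction $h|_{\mathbb E}$ (which it shows lies in $\mathcal{FE}(\mathbb E,G)$ by collapsing the even-length cycle product two letters at a time via $h(v_j)h(e_j)h(v_{j+1})=h(e_j)$) together with the single value $h(v)$ at a fixed vertex; in the non-bipartite case it switches to the shifted map $f(e)=h(v)h(e)$ and argues directly, by traversing a fixed odd cycle in both directions, that the base value $h(v_1)$ must be an involution. You instead use the shifted map $F(e)=h(x)h(e)$ uniformly, identify the fibre over each $F\in\mathcal{FE}(\mathbb E,G)$ as the set of vertex labelings $g$ with $g(y)=F(e)^{-1}g(x)^{-1}F(e)$, and then eliminate $F$ by writing $F(e)=\phi(x)^{-1}\phi(y)$ (Proposition~\ref{HFL}) and substituting $\psi=\phi g\phi^{-1}$, which reduces everything to the edge-independent condition $\psi(y)=\psi(x)^{-1}$. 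This buys you a single unified bijection $\mathcal{FU}\cong\mathcal{FE}\times\{\psi\}$ in which the bipartite/non-bipartite split appears only at the very end as an elementary parity count on $\Gamma'$; the paper's approach, by contrast, avoids introducing the potential $\phi$ and the auxiliary $\psi$ but pays for it with two separate constructions and a more hands-on verification in each case.
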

\begin{proof}
If $\Gamma$ consists only of one vertex then part (1) of our theorem is trivial. Otherwise, let us look at any one non-loop edge of $\Gamma$ as it is depicted in Fig.~\ref{OneEdge}:
\begin{figure}[htp]
 $$\xygraph{
!{<0cm,0cm>;<1cm,0cm>:<0cm,1cm>::}
!{(0,1) }*+{a}="1"
!{(3,1) }*+{b}="2"
"1":"2"^p  }$$
\caption{An edge with values on it and on its origin and end.}\label{OneEdge}
\end{figure}

The letters on the edge and the vertices denote the values of a function $h:V\bigcup\mathbb E\rightarrow G$. Assume that $h$ is balanced, i.e., $h\in\mathcal{FU}(V\bigcup\mathbb E,G)$. Then for the cycle obtained by walking along this edge and returning back along it we have the equation $apbp^{-1}=1$, which immediately implies that $apb=p$.

\noindent
(1) Let $\Gamma'$ be bipartite. Then $\Gamma$ does not have cycles of odd length.

Let us construct a bijection between the
 sets $\mathcal{FU}(V\bigcup\mathbb E,G)$ and $\{(a,f)\,|\,a\in G\,,\,f\in\mathcal{FE}(\mathbb E,G)\}$. Such a bijection will imply $|\mathcal{FU}(V\bigcup\mathbb E,G)|=|\{(a,f)\,|\,a\in G\,,\,f\in\mathcal{FE}(\mathbb E,G)\}|=|G|\cdot |G|^{|V|-1}=|G|^{|V|}$. Let $v_1, e_1,v_2,e_2 \dots, v_{2k},e_{2k}$ be a cycle in $\Gamma$. Take $h\in \mathcal{FU}(V\bigcup\mathbb E,G)$. Then $h(v_1)\cdot h(e_1)\cdot h(v_2)\cdot h(e_2) \cdots h(v_{2k})\cdot h(e_{2k})=1$. As we noticed above, $h(v_j)\cdot h(e_j)\cdot h(v_{j+1})=h(e_j)$, therefore, $h(e_1)\cdot h(e_2) \cdots  h(e_{2k})=1$. So, define $f\in\mathcal{FE}(\mathbb E,G)$ as a restriction of $h$ on $\mathbb E$, i.e., $f(e)=h(e)$ for every $e\in\mathbb E$. Now fix a certain vertex $v$, and correspond to our $h\in \mathcal{FU}(V\bigcup\mathbb E,G)$ the pair $(h(v),f)$. Obviously, this correspondence is a bijection between the sets $\mathcal{FU}(V\bigcup\mathbb E,G)$ and $\{(a,f)\,|\,a\in G\,,\,f\in\mathcal{FE}(\mathbb E,G)\}$. Indeed, take a pair $(a,f)$, where $a\in G$ and $f\in\mathcal{FE}(\mathbb E,G)$ and define $h(e)=f(e)$ for every $e\in\mathbb E$ and $h(v)=a$. Let $e\in E$ be an edge which goes from the chosen vertex $v$ to a vertex $w$, put $h(w)=h(e)^{-1}a^{-1}h(e)$. Since $\Gamma'$ is connected, we can continue this way, and every vertex will receive a value, which depends only on values of $f$ on edges and on the element $a\in G$ that we started with.

\noindent
(2) Now assume that $\Gamma'$  is not bipartite. Then $\Gamma$ has a cycle of odd length.

Let us construct a bijection between the sets $\mathcal{FU}(V\bigcup\mathbb E,G)$ and $\{(a,f)\,|\,a\in G_2\,,\,f\in\mathcal{FE}(\mathbb E,G)\}$. Such a bijection will imply $|\mathcal{FU}(V\bigcup\mathbb E,G)|=|\{(a,f)\,|\,a\in G\,,\,f\in\mathcal{FE}(\mathbb E,G)\}|=|G_2|\cdot |G|^{|V|-1}$. Let $v_1, e_1,v_2,e_2 \dots, v_{2k+1},e_{2k+1}$ be a cycle of odd length in $\Gamma$. Take $h\in \mathcal{FU}(V\bigcup\mathbb E,G)$. Then $h(v_1)\cdot h(e_1)\cdot h(v_2)\cdot h(e_2) \cdots h(v_{2k+1})\cdot h(e_{2k+1})=1$. Again, as we noticed above, $h(v_j)\cdot h(e_j)\cdot h(v_{j+1})=h(e_j)$, therefore, $h(e_1)\cdot h(e_2) \cdots  h(e_{2k})\cdot h(v_{2k+1})\cdot h(e_{2k+1})=1$. Since $h(v_{2k+1})\cdot h(e_{2k+1})\cdot h(v_{1})=h(e_{2k+1})$, we get $h(e_1)\cdot h(e_2) \cdots  h(e_{2k})\cdot h(e_{2k+1})=h(v_1)$. In the same way walking along this cycle in the opposite direction we get $(h(e_{2k+1}))^{-1}\cdot (h(e_{2k}))^{-1} \cdots  (h(e_2))^{-1}\cdot (h(e_1))^{-1}=h(v_1)$. Thus, $h(v_1)=(h(v_1))^{-1}$, i.e., $h(v_1)\in G_2$. To obtain from $h\in \mathcal{FU}(V\bigcup\mathbb E,G)$ a balanced function $f$ on edges we do the following: define $f(e)=h(v)h(e)$ where the vertex $v$ is the origin of the edge $e$. Since $h$ is balanced on $V\bigcup\mathbb E$ it is easy to see that $f$ on $\mathbb E$. So, to $h\in \mathcal{FU}(V\bigcup\mathbb E,G)$ we correspond the pair $(h(v_1),f)$, where $h(v_1)\in G_2$ and $f\in\mathcal{FE}(\mathbb E,G)$.

 Conversely, take a pair $(a,f)$ where $a\in G_2$ and $f\in\mathcal{FE}(\mathbb E,G)$, and construct $h\in \mathcal{FU}(V\bigcup\mathbb E,G)$ as follows. Choose a vertex $v$ put $h(v)=a$.  For any edge $e\in \mathbb E$ which starts at the vertex $v$ put $h(e)=h(v)f(e)$. Let $w$ be the end of an edge $e\in \mathbb E$ whose origin is $v$. Put $h(w)=(h(e))^{-1}h(v)h(e)=(f(e))^{-1}af(e)$. Since $\Gamma'$ is connected, we can continue this way, and thus we define values of $h$ on all vertices and edges. By direct calculation it can be easily seen that $h$ is balanced. Let us look at the example depicted in Fig.~\ref{triangle} which illustrates our argument.

\begin{figure}[htp]
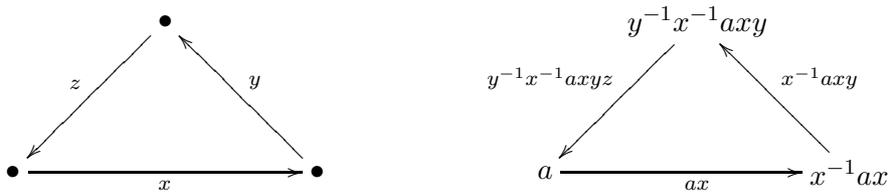
\label{triangle}
\begin{center}
$$\xygraph{
!{<0cm,0cm>;<1cm,0cm>:<0cm,1cm>::}
!{(0,1) }*+{\bullet}="1"
!{(4,1) }*+{\bullet}="2"
!{(2,3) }*+{\bullet}="3"
!{(7,1) }*+{a}="4"
!{(11,1) }*+{x^{-1}ax}="5"
!{(9,3) }*+{y^{-1}x^{-1}axy}="6"
"1":"2"_{x} "2":"3"_{y} "3":"1"_{z}
"4":"5"_{ax} "5":"6"_{x^{-1}axy} "6":"4"_{y^{-1}x^{-1}axyz}}
$$
\caption{Constructing $h$ from $(a,f)$.}
\end{center}
\end{figure}
In this picture $x$, $y$, $z$ are values of a balanced function on edges, i.e., $xyz=1$, and $a\in G_2$, i.e., $a^2=1$. The right triangle represents a balanced function on vertices and edges as described above:
$$
a\cdot ax\cdot x^{-1}ax\cdot x^{-1}axy\cdot y^{-1}x^{-1}axy\cdot y^{-1}x^{-1}axyz=1\,\,,
$$
$$
a\cdot ax\cdot x^{-1}ax\cdot(ax)^{-1}=1\,\,,
$$
$$
x^{-1}ax\cdot x^{-1}axy\cdot y^{-1}x^{-1}axy\cdot(x^{-1}axy)^{-1}=1\,\,,
$$
$$
y^{-1}x^{-1}axy\cdot y^{-1}x^{-1}axyz\cdot a\cdot (y^{-1}x^{-1}axyz)^{-1}=1\,\,.
$$
It is easy to see that two constructed mappings, $h\mapsto(a,f)$ and $(a,f)\mapsto h$ are inverse to each other.
\end{proof}

\section{The rigid case}
Let $\Gamma=(V,E)$ be a weakly connected directed graph. Recall that in the rigid case we are allowed to walk only in the direction of an edge but not against it. It naturally changes the notion of a path and of a cycle in comparison with the flexible case.
\bde\label{rpathdef} A path $P$ from a vertex $x$ to a vertex $y$ is an alternating sequence $v_1$, $e_1$, $v_2$, $e_2$,...,$v_{n}$, $e_{n}$ of vertices from $V$ and different edges from $E$ (and not $\mathbb E$) such that $v_1=x$ and each $e_j$, for $j=1,...,n-1$, goes from $v_j$ to $v_{j+1}$ and $e_n$ goes from $v_n$ to $y$.
\ede
Obviously, not every flexible cycle is a rigid cycle.

Similarly to the flexible case denote by $\mathcal{RE}(E,G)$ and $\mathcal{RU}(V\bigcup E,G)$ the sets of balanced $G$-valued functions on edges and balanced functions of the entire graph $\Gamma$ (vertices and edges), respectively.
\bde Two vertices $x$ and $y$ of $\Gamma$ are called strongly connected if exists a path $P_1$ from $x$ to $y$ and a path $P_2$ from $y$ to $x$. We also say that every vertex is strongly connected to itself.
\ede
Note: We did not require that $P_1$ and $P_2$ do not have common edges.
\bde\label{rcycledef} A cycle is a path $P$ from a vertex $x$ to itself. 
\ede
Note: Since the above-mentioned $P_1$ and $P_2$ paths might have common edges, $P_1$ followed by $P_2$ might not be a cycle. There could even not exist any cycle, containing both $x$ and $y$. Consider the following example:
\begin{exa} Consider the graph $\Gamma$ depicted in Fig.~\ref{StrongCon} with $V(\Gamma)=\{x,v,w,y\}$ and $E(\Gamma)=\{e_1,e_2,e_3,e_4,e_5\}$.
 The path $P_1=x,e_1,v,e_5,w,e_4$ is the only path which goes from $x$ to $y$ and the path $P_2=y,e_2,v,e_5,w,e_3$ is the only path which goes from $y$ to $x$. They have a common edge $e_5$. Thus, according to Definitions~\ref{rpathdef} and~\ref{rcycledef}, there exists no cycle, containing both $x$ and $y$.
\begin{figure}[htp]
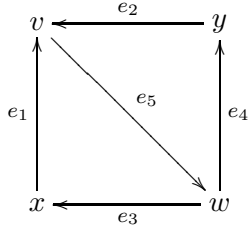

$$\xygraph{
!{<0cm,0cm>;<1.2cm,0cm>:<0cm,1.2cm>::}
!{(1,1) }*+{x}="x"
!{(1,3) }*+{v}="v"
!{(3,3) }*+{y}="y"
!{(3,1)}*+{w}="w"
"x":"v"^{e_1}   "v":"w"^{e_5}
"w":"y"_{e_4}   "w":"x"^{e_3}  "y":"v"_{e_2}
}$$
\caption{The vertices $x$, $y$ are strongly connected but no cycle contains both of them.}\label{StrongCon}
\end{figure}
\end{exa}
Strong connectivity defines an equivalence relation on the vertices of $\Gamma$. The equivalence classes of strongly connected vertices, together with all the edges between the vertices of each class, are called the strongly connected components of $\Gamma$. We denote the number of strongly connected components of $\Gamma$ by $\bar{k}(\Gamma)$.
Recall that $G^S$ usually denotes the set of all $G$-valued functions on the set $S$.
\begin{lem}\label{HR} If $\bar{k}(\Gamma)=1$ then $|\mathcal{RE}(E,G)|=|G|^{|V|-1}$ just like in the flexible case.
\end{lem}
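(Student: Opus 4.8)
The plan is to reproduce the coboundary bijection of Proposition~\ref{HFL}, now in the rigid setting. Fix a vertex $v\in V$. I would set up the same two maps: from a function $g\colon V\to G$ with $g(v)=1$ produce $f(e)=g(x)^{-1}\cdot g(y)$ for each directed edge $e$ running from $x$ to $y$, and conversely from $f\in\mathcal{RE}(E,G)$ reconstruct $g$ by propagating along directed paths out of $v$. The forward direction is immediate and identical to the flexible case: for any rigid cycle $v_1,e_1,\dots,v_n,e_n$ the product $f(e_1)\cdots f(e_n)$ telescopes to $g(v_1)^{-1}g(v_1)=1$, so $f$ is balanced. The genuinely new difficulty is the reverse direction: since in the rigid case we may not walk against an edge, two directed paths from $v$ to a vertex $w$ do not combine into a single cycle (one of them traversed backwards), so the well-definedness argument of Proposition~\ref{HFL} breaks down. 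This is exactly the phenomenon highlighted by Figure~\ref{StrongCon}, where the forward and return paths are forced to share the edge $e_5$.

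The key step I would isolate first is a closed-walk lemma: if $f\in\mathcal{RE}(E,G)$ then the product of $f$ along \emph{every} closed directed walk (a closed alternating sequence of vertices and edges in which edges may now repeat) equals $1$, not merely along cycles. I would prove this by strong induction on the number $m$ of edges of the walk $W=w_0,e_1,\dots,e_m,w_0$. If the vertices $w_0,\dots,w_{m-1}$ are pairwise distinct then the edges are automatically distinct as well (even in a multigraph, since equal edges would force equal endpoints, and a loop already repeats a vertex), so $W$ is a cycle in the sense of Definition~\ref{rcycledef} and its product is $1$ by hypothesis. Otherwise some vertex repeats, say $w_i=w_j$ with $i<j$ and $(i,j)\neq(0,m)$; then $W$ splits into the inner closed walk $w_i,e_{i+1},\dots,e_j,w_j$ and the outer closed walk obtained by deleting that stretch, both strictly shorter. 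By induction the inner product is $1$, and since it occurs as a contiguous block inside the product for $W$, removing it leaves exactly the product for the outer walk, which is $1$ by induction as well; hence the product for $W$ is $1$.

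With the closed-walk lemma in hand the reverse direction goes through using $\bar k(\Gamma)=1$, i.e. strong connectivity, in both of its directions. Reachability from $v$ lets me assign to each $w$ the value $g(w)$ equal to the product of $f$ along some directed walk from $v$ to $w$. For well-definedness I would take two walks $W_1,W_2$ from $v$ to $w$ together with a return walk $Q$ from $w$ to $v$ (which exists by strong connectivity); then $W_1Q$ and $W_2Q$ are closed walks, so the lemma forces the products of $W_1$ and $W_2$ to coincide after cancelling the common factor contributed by $Q$. The same extension-by-one-edge computation then shows $g(y)=g(x)\cdot f(e)$ for every edge $e$ from $x$ to $y$, i.e. $f(e)=g(x)^{-1}g(y)$, while $g(v)=1$. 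A telescoping check shows that $g\mapsto f$ and $f\mapsto g$ are mutually inverse, yielding a bijection between $\mathcal{RE}(E,G)$ and $\{g\colon V\to G\mid g(v)=1\}$, and hence $|\mathcal{RE}(E,G)|=|G|^{|V|-1}$. The main obstacle is precisely the closed-walk lemma: it is the ingredient that compensates for the loss of backward traversal and makes strong connectivity the exact hypothesis under which the rigid count matches the flexible one.
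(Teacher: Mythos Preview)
Your argument is correct, but it proceeds along a different line than the paper's. The paper does not build the coboundary bijection directly in the rigid setting; instead it reduces the rigid problem to the already-proved flexible one. Concretely, for each edge $e$ from $x$ to $y$ it uses strong connectivity to find a rigid path $P$ from $y$ to $x$ avoiding $e$, observes that the product of $f$ along $P$ must be $(f(e))^{-1}$ (since $e$ followed by $P$ is a rigid cycle), and then adjoins a new reverse edge $\bar e$ with value $(f(e))^{-1}$, arguing this gives a bijection of balanced functions before and after the addition. Iterating over all edges turns the graph into one where every edge has its reverse present with inverse value, which is exactly the flexible setup of Proposition~\ref{HFL}. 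Your route, by contrast, stays inside the rigid graph and isolates the closed-walk lemma as the engine: it is what lets two directed walks $v\to w$ be compared via a common return walk $Q$, replacing the backward traversal that the flexible proof used for free. What your approach buys is a self-contained and fully explicit argument---in particular, the step the paper leaves implicit (that the extended function remains balanced on the enlarged graph after adding $\bar e$) is essentially your closed-walk lemma in disguise, so you have made precise the part the paper waves through. What the paper's approach buys is a clean conceptual reduction: once the flexible count is known, the rigid strongly-connected case follows by a short structural observation rather than by rebuilding the bijection from scratch.
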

\begin{proof} Let $E=\{e_1,...,e_n\}$. Edge $e_1$ goes from some $x$ to some $y$. There is a path $P$ which goes from $y$ to $x$ and does not contain $e_1$, since if $P$ contains $e_1$ we can just delete this $e_1$ and all the vertices and edges, which come after it, from $P$. Thus, the product of values of any $f\in\mathcal{RE}(E,G)$ along $P$ must be equal to $(f(e_1))^{-1}$. Hence we can define the graph $\Gamma_1$ adding a new edge $\bar{e}_1$ to $\Gamma$ which goes from $y$ to $x$ and we can extend the function $f$ to a balanced function on edges of the new graph $\Gamma_1$ setting $f(\bar{e}_1)=(f(e_1))^{-1}$. So there is a natural bijection between the set of the balanced functions on edges of $\Gamma_1$ after the addition of $\bar{e}_1$  and the set of the balanced functions on edges of original graph $\Gamma$ before the addition of the edge $\bar{e}_1$. Repeating this process for all the edges of $E$ we reduce the problem to the flexible case, while not changing the set of the balanced functions on edges of $\Gamma$.
\end{proof}
\bpr\label{directed} Let $\bar{k}(\Gamma)$ be the number of strongly connected components of $\Gamma$, and $r(\Gamma)$ be the number of all the edges in $\Gamma$ which go from a vertex in one strongly connected component of $\Gamma$ to a vertex in a different strongly connected component of $\Gamma$.\\ Then $|\mathcal{RE}(E,G)|=|G|^{|V|-\bar{k}(\Gamma)+r(\Gamma)}$.
\epr
\begin{proof} Let $V_1,...,V_t$ be the equivalence classes of vertices $\Gamma$ with respect to the relation of strong connectivity, and denote the set of edges between the vertices of $V_j$ by $E_j$. Then, obviously, $\mathcal{RE}(E,G)=\mathcal{RE}(E_1,G)\times\cdots\times \mathcal{RE}(E_t,G)\times G^{U}$, where $U$ is the set of all the edges between the vertices in different strongly connected components of $\Gamma$. By Lemma~\ref{HR} we conclude that $|\mathcal{RE}(E,G)|=|G|^{|V_1|-1+|V_2|-1+\cdots+|V_t|-1+r(\Gamma)}=|G|^{|V|-\bar{k}(\Gamma)+r(\Gamma)}$.
\end{proof}
\bpr
Notations as above.
Then $|\mathcal{RU}(V\bigcup E,G)|=|G|^{2|V|-\bar{k}(\Gamma)+r(\Gamma)}$.
\epr
\begin{proof}
To every $h\in\mathcal{RU}(V\bigcup E,G)$ corresponds the pair $(g,f)$, where $g\in G^V$ is just the restriction of $h$ on vertices, and the value of $f\in \mathcal{RE}(E,G)$ on every edge $e$ is equal to $h(v)h(e)$, where the vertex $v$ is the origin of the edge $e$. Such a function $f$ is obviously a balanced function on edges since its value along any path is equal to the value of $h$ along that path. This correspondence between the elements of $\mathcal{RU}(V\bigcup E,G)$ and the pairs from $G^V\times \mathcal{RE}(E,G)$ is a bijection. Indeed, for a given pair $(g,f)$, where $g$ is any function on vertices and $f$ is a balanced function on edges, we can construct $h:V\bigcup E\rightarrow G$ as follows: $h(v)=g(v)$ for all $v\in V$ and $h(e)=(g(v))^{-1}f(e)$ for all $e\in E$, where the vertex $v$ is the origin of the edge $e$. The constructed bijection implies that $|\mathcal{RU}(V\bigcup E,G)|=|G|^{|V|}\cdot |\mathcal{RE}(E,G)|=|G|^{2|V|-\bar{k}(\Gamma)+r(\Gamma)}$.
\end{proof}
Thus, the flexible problem for a graph $\Gamma=(V, E)$ can be regarded as the rigid problem for the graph $\Gamma'=(V, \mathbb E)$. Vice versa, the rigid problem for a graph $\Gamma$ can be regarded as a free product of the rigid problems for the strongly connected components of $\Gamma$ multiplied by $G^{r(\Gamma)}$ where $r(\Gamma)$ is the number of edges between different strongly connected components of $\Gamma$.


\section{Conclusions and further research}
Notice that although the sets of non-Abelian-group-valued balanced function on edges and balanced functions on vertices and edges do not inherit any natural group structure, their enumeration for directed graphs is similar to the Abelian case studied in~\ref{CGL2}. For undirected graphs the non-Abelian case is much more involved, almost nothing is known for today.

A natural way to extend the findings of the present paper is to consider balanced functions on a graph $\Gamma=(V,E)$ with values in a finite semigroup $X$. Since a semigroup does not necessarily have an identity element, the requirement of vanishing on cycles can be replaced by the following: choose an element $a\in X$ and require that the products of values of a function $f$ along any cycle is equal to $a$. When the semigroup $X$ is commutative, if instead of fixing $a\in X$ we require that the products along all the cycles are equal to each other, then the set of these functions also form a semigroup, and it is natural to inquire what is the structure of this semigroup of functions. The last question is not so obvious even for Abelian groups. When the semigroup $X$ is non-commutative the question of enumeration arises. Also there is another way to generalize this problem for semigroups:
for each directed edge $e$ which goes from a vertex $v$ to a vertex $w$ require $f(v)f(e)f(w)=f(e)$, and for any cycle $v_1$, $e_1$, $v_2$, $e_2$,...,$v_n$, $e_n$ require $f(v_1)f(e_1)f(v_2)f(e_2)\cdots f(v_n)f(e_n)f(v_1)=f(v_1)$.


\end{document}